\documentclass{amsart}[12pt]
\usepackage{graphicx,graphics, amsmath, amsthm, amscd, amsfonts}

\usepackage{latexsym}
\makeatletter \oddsidemargin.9375in \evensidemargin \oddsidemargin
\marginparwidth1.9375in \makeatother


\newtheorem{theorem}{Theorem}[section]
\newtheorem{lemma}[theorem]{Lemma}
\newtheorem{proposition}[theorem]{Proposition}

\theoremstyle{definition}

\numberwithin{equation}{section}

\begin{document}
\Large
\title[ ]
{Additive maps preserving idempotency of products
or Jordan products of operators}

\author[Ali Taghavi and Roja Hosseinzadeh]{Ali Taghavi and Roja
Hosseinzadeh}

\address{{ Department of Mathematics, Faculty of Mathematical Sciences,
 University of Mazandaran, P. O. Box 47416-1468, Babolsar, Iran.}}

\email{taghavi@umz.ac.ir,  ro.hosseinzadeh@umz.ac.ir}

\subjclass[2000]{46J10, 47B48}

\keywords{ Operator algebra; Jordan
product; Idempotent.}

\begin{abstract}\large
Let $\mathcal{H}$ and $\mathcal{K}$ be infinite dimensional Hilbert spaces, while $\mathcal{B(H)}$ and
$\mathcal{B(K)}$ denote the algebras of all linear bounded operators on $\mathcal{H}$ and $\mathcal{K}$, respectively. We characterize the forms of additive mappings from $\mathcal{B(H)}$ into $\mathcal{B(K)}$
that preserve the nonzero idempotency of either Jordan products of operators or usual products of operators in both directions.
\end{abstract} \maketitle

\section{Introduction And Statement of the Results}
\noindent  The study of maps on operator algebras
preserving certain properties or subsets is a topic which
attracts much attention of many authors. Some of the problems
are concerned with preserving a certain property of usual product
or other products of operators. For example see $[3,5-9,11,13,14]$.\par
Let $\mathcal{R}$ and $ \mathcal{R'}$ be two rings and $\phi
:\mathcal{R} \rightarrow \mathcal{R'}$ be a map. Denote by $P_
\mathcal{R}$ and $P_ \mathcal{R'}$ the set of all idempotent
elements of $ \mathcal{R}$ and $ \mathcal{R'}$, respectively. We
say that $\phi$ preserves the idempotency of product of two elements, the
idempotency of triple Jordan product of two elements and the idempotency of Jordan product of two elements, whenever we have
$$AB \in P_ \mathcal{R} \Rightarrow \phi (A) \phi(B) \in P_ \mathcal{R'},$$
$$ ABA \in P_ \mathcal{R} \Rightarrow \phi (A) \phi(B) \phi(A) \in P_ \mathcal{R'}$$
and
$$ \frac{1}{2}( AB+BA) \in P_ \mathcal{R} \Rightarrow \frac{1}{2}( \phi (A) \phi(B)+ \phi (B) \phi(A) )\in P_ \mathcal{R'},$$
respectively.
The triple Jordan product and the Jordan product of two elements $A$ and $B$ are defined as $ABA$ and $ \frac{1}{2}( AB+BA) $, respectively.
Let $\mathcal{H}$ and $\mathcal{K}$ be infinite dimensional Hilbert spaces, while $\mathcal{B(H)}$ and $\mathcal{B(K)}$ denote the algebras of all linear bounded operators on $\mathcal{H}$ and $\mathcal{K}$, respectively. In $[7]$, authors characterized  the forms of unital surjective maps on $B(X)$ preserving the nonzero idempotency of product of operators in both directions. Also in $[13]$, authors characterized the forms of linear surjective maps on $B(X)$ preserving the nonzero
idempotency of either products of operators or triple Jordan products of operators. \par
In this paper, we determine the form of additive mapping $\phi:\mathcal{B(H)}\rightarrow  \mathcal{B(K)}$ such that the range of $\phi$ contains all minimal idempotents and $I$ and also $\phi$ preserves the nonzero idempotency of Jordan products of operators in both directions. Moreover, we determine the form of surjective additive mapping $\phi:\mathcal{B(H)}\rightarrow  \mathcal{B(K)}$ that preserves the nonzero idempotency of usual products of operators in both directions. Our main result are as follows.
\begin{theorem} Let $\mathcal{H}$ and $\mathcal{K}$ be two infinite dimensional real or complex Hilbert spaces and $\phi:\mathcal{B(H)}\rightarrow  \mathcal{B(K)}$ be an additive map such that the range of $\phi$ contains all minimal idempotents and $I$. If $\phi$  preserves the nonzero idempotency of Jordan products of operators in both directions, then $\phi$ either annihilates minimal idempotents or there exists a bounded linear or conjugate linear bijection $A: \mathcal{H} \rightarrow \mathcal{K}$ such that $ \phi(T)= \xi ATA^{-1}$ for every $T \in \mathcal{B(H)}$ or $ \phi(T)= \xi AT^tA^{-1}$ for every $T \in \mathcal{B(H)}$, where $ \xi = \pm 1$ ( in the case that $\mathcal{H}$ and $\mathcal{K}$ are real, $A$ is linear).
\end{theorem}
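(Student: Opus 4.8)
The plan is to show that, outside the degenerate case in which every minimal idempotent is sent to $0$, the map $\phi$ is (up to the global sign $\xi$) a Jordan homomorphism, and then to invoke the classical structure theory of Jordan homomorphisms of operator algebras, which forces such a map to be either a homomorphism or an anti-homomorphism and hence of the spatial form $T \mapsto \xi ATA^{-1}$ or $T \mapsto \xi AT^tA^{-1}$. Concretely, I would first record the elementary consequences of the hypothesis. Since $\frac{1}{2}(P\cdot P + P\cdot P) = P^2 = P$ is a nonzero idempotent for every nonzero idempotent $P$, the two-directional preserving property yields that $\phi(P)^2$ is a nonzero idempotent whenever $\phi(P)\neq 0$, so $\phi$ cannot collapse idempotents arbitrarily. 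I would also exploit that $I$ lies in the range: fixing $S$ with $\phi(S)=I$, the identity $\frac{1}{2}(\phi(A)\phi(S)+\phi(S)\phi(A)) = \phi(A)$ converts the idempotency of $\phi(A)$ into the idempotency of $\frac{1}{2}(AS+SA)$, a dictionary I would use repeatedly to transport structure back and forth.

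The heart of the argument is a careful analysis on the minimal (rank-one) idempotents, using the earlier lemmas describing exactly when a Jordan product of two idempotents is again idempotent. The first goal is the dichotomy: I would show that if $\phi(P_0)\neq 0$ for a single minimal idempotent $P_0$, then $\phi(P)$ is a nonzero $\pm$ minimal idempotent for every minimal idempotent $P$. The mechanism is that orthogonality and the ``sum-is-idempotent'' relations among rank-one idempotents are detectable purely through idempotency of Jordan products, so $\phi$ transports this incidence data faithfully; combined with the assumption that the range contains all minimal idempotents, this forces $\phi$ to act bijectively on the family of rank-one idempotent directions. At this stage one reconstructs, from the preserved relations of orthogonality and collinearity, a bijection of the lattice of one-dimensional subspaces of $\mathcal{H}$ onto that of $\mathcal{K}$.

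With the projective structure in hand, I would apply the fundamental theorem of projective geometry to produce a semilinear bijection $A:\mathcal{H}\to\mathcal{K}$, with an associated field automorphism, implementing $\phi$ on rank-one idempotents, and use the boundedness hypothesis to cut the admissible field automorphisms down to the identity or complex conjugation, giving the linear or conjugate-linear alternative and forcing $A$ to be linear in the real case. Additivity then propagates the formula from rank-one idempotents to all finite-rank operators (which are finite sums of scalar multiples of rank-one idempotents), and the dictionary furnished by $\phi(S)=I$, together with a standard density argument, extends it to all of $\mathcal{B(H)}$. The classical fact that a Jordan homomorphism of a prime ring is either multiplicative or anti-multiplicative then splits the conclusion into the conjugation form $\phi(T)=\xi ATA^{-1}$ and the transpose form $\phi(T)=\xi AT^tA^{-1}$, while evaluating on a fixed idempotent pins the scalar to $\xi = \pm 1$.

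The step I expect to be the main obstacle is the bridge from mere preservation of idempotency of Jordan products to genuine Jordan multiplicativity: the hypothesis only sees the coarse ``spectral'' information encoded by idempotents, so recovering the full Jordan product requires extracting all the algebraic relations from the incidence geometry of rank-one idempotents and ruling out exotic additive (but non-semilinear) behavior. Closely tied to this is the orientation bookkeeping, namely correctly separating the homomorphism from the anti-homomorphism case and verifying that these two are exactly the possibilities, together with confirming via boundedness that the field automorphism arising from projective geometry is tame. The annihilation alternative must be carried along throughout, since it is precisely the obstruction to starting the reconstruction.
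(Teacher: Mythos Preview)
Your plan is far more elaborate than what the paper does, and it also has a real gap. The paper's argument is essentially two lines once the preparatory lemma is in place: first one proves (this is Lemma~2.2(ii) in the paper, and it is the only nontrivial step) that the element $S$ with $\phi(S)=I$ must in fact be $S=\pm I$, i.e.\ $\phi(I)=\pm I$. You use $S$ only as an abstract preimage of $I$ and never pin it down; this is the crucial missing ingredient. Once $\phi(I)=\pm I$ is known, the observation $I\circ P=P$ immediately gives that $\pm\phi(P)=\phi(I)\circ\phi(P)$ is a nonzero idempotent whenever $P$ is, so $\phi$ or $-\phi$ preserves idempotents in both directions. At that point one simply invokes Cui's structure theorem for additive idempotent-preserving maps on $\mathcal{B(H)}$, which already contains the dichotomy ``annihilates minimal idempotents or is spatially implemented.'' There is no need to rebuild the projective geometry, the semilinear bijection, or the Jordan-homomorphism structure theory by hand.

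The place where your outline would actually break is the ``standard density argument'' you invoke to pass from finite-rank operators to all of $\mathcal{B(H)}$. Finite-rank operators are not norm-dense in $\mathcal{B(H)}$, and $\phi$ is only assumed additive, with no continuity of any kind; so density in a weaker topology is of no help. Cui's theorem handles this extension by purely algebraic means (exploiting that idempotents generate $\mathcal{B(H)}$ additively in a suitable sense), and the paper is right to cite it rather than redo it. Your reconstruction via the fundamental theorem of projective geometry would at best reproduce Cui's result, and the extension step you flag as routine is exactly the hard part of that result.
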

\begin{theorem} Let $\mathcal{H}$ and $\mathcal{K}$ be two infinite dimensional complex Hilbert spaces and $\phi:\mathcal{B(H)}\rightarrow  \mathcal{B(K)}$ be a surjective additive map. If $\phi$  preserves the nonzero idempotency of products of operators in both directions, then there exists a bounded linear or conjugate linear bijection $A: \mathcal{H} \rightarrow \mathcal{K}$ such that $ \phi(T)= \xi ATA^{-1}$ for every $T \in \mathcal{B(H)}$ or $ \phi(T)= \xi AT^tA^{-1}$ for every $T \in \mathcal{B(H)}$, where $ \xi = \pm 1$.
\end{theorem}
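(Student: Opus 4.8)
The plan is to work throughout with the binary relation $R(S,T)$, meaning that $ST$ is a nonzero idempotent, which $\phi$ preserves in both directions, and to reconstruct from $R$ the rank-one idempotents together with the projective incidence structure of $\mathcal{H}$; the fundamental theorem of projective geometry will then force the stated form. Writing $x\otimes f$ for the rank-one operator $v\mapsto f(v)x$, the identity $(x\otimes f)(y\otimes g)=f(y)\,x\otimes g$ shows that $x\otimes g$ is idempotent exactly when $g(x)=1$, and this single computation governs everything that follows.

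First I would prove bijectivity. Given $C\neq 0$, choose $x$ with $Cx\neq 0$ and $f$ with $f(Cx)=1$; then $C(x\otimes f)=(Cx)\otimes f$ is a nonzero idempotent, so $R(C,x\otimes f)$ holds, whence $\phi(C)\phi(x\otimes f)$ is a nonzero idempotent and in particular $\phi(C)\neq 0$. Thus $\phi$ is injective, and being surjective by hypothesis it is bijective; as an additive bijection it is $\mathbb{Q}$-linear. I would then record the two incidence identities that make $R$ geometric: for a rank-one idempotent $P=x\otimes f$ and arbitrary $T$, both $PT$ and $TP$ are nonzero idempotents iff $f(Tx)=1$; and for rank-one idempotents $P=x\otimes f$, $Q=y\otimes g$ one has $R(P,Q)$ iff $f(y)g(x)=1$.

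The central phase is to show that $\phi$ carries the rank-one idempotents onto the signed rank-one idempotents $\{\pm E : E \text{ a rank-one idempotent}\}$. Since $R(P,P)$ holds for every rank-one idempotent $P$, the operator $\phi(P)^{2}$ is a nonzero idempotent; the only rank-one $S$ with $S^{2}$ a nonzero idempotent are $S=\pm E$ with $E$ a rank-one idempotent, so the task reduces to controlling rank, which I would do by isolating rank-one idempotents through a condition phrased solely in terms of $R$ (here injectivity already excludes the annihilating alternative of the preceding theorem, while surjectivity guarantees that the induced map of incidence structures is onto, as the projective-geometry theorem requires). Tracking the local signs $\varepsilon_{P}\in\{\pm 1\}$ through $R(P,Q)\iff f(y)g(x)=1$, one checks that the induced action on ranges and kernels respects orthogonality and the order of idempotents, so $\phi$ descends to a bijection of the point--hyperplane incidence structure of $\mathcal{H}$ onto that of $\mathcal{K}$. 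The fundamental theorem of projective geometry then produces a semilinear bijection $A:\mathcal{H}\to\mathcal{K}$, linear or conjugate linear since the scalars are complex, so that on rank-one idempotents $\phi$ acts either as $x\otimes f\mapsto \xi\,(Ax)\otimes((A^{-1})^{\ast}f)$, giving $\phi(P)=\xi APA^{-1}$, or, in the order-reversing case, as the transpose version $\phi(P)=\xi AP^{t}A^{-1}$, with $\xi=\pm 1$ the common value of the $\varepsilon_{P}$. Finally, since every rank-one operator is a difference of two rank-one idempotents and every finite-rank operator a finite sum of rank-one operators, $\mathbb{Q}$-linearity propagates the formula to all finite-rank $T$, and for general $T$ testing $R(T,x\otimes f)\iff f(Tx)=1$ against all rank-one idempotents pins $\phi(T)$ down and forces equality; the requirement that $T\mapsto \xi ATA^{-1}$ land in $\mathcal{B(K)}$ makes $A$ bounded with bounded inverse.

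I expect the main obstacle to be this central phase: extracting from the single relation $R$ a clean characterization of the rank-one idempotents, carrying the local signs $\varepsilon_{P}$ coherently so that orthogonality and the lattice order are genuinely preserved, and separating the non-transpose from the transpose alternative before the fundamental theorem of projective geometry can be invoked.
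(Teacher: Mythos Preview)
Your route is genuinely different from the paper's. The paper does not attempt to rebuild projective geometry from the relation $R$; instead it first shows $\phi(I)=\pm I$ (so $\pm\phi$ preserves idempotents in both directions), then observes via a short matrix computation that any additive idempotent preserver sends square-zero operators to square-zero operators, and separately proves a purely order-theoretic characterization of $\mathbb{C}^{*}P$ for rank-one idempotents $P$ in terms of $R$, which yields $\phi(\mathbb{C}P)\subseteq\mathbb{C}\phi(P)$. With ``square-zero preserving in both directions'' plus ``$\phi(\mathbb{C}P)\subseteq\mathbb{C}\phi(P)$'' in hand, the paper invokes the Bai--Hou theorem on additive square-zero preservers as a black box, and reads off $\xi=\pm 1$ from $\phi(I)=\pm I$. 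This is considerably shorter because the projective-geometry work is hidden inside the cited result.

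Your proposal, by contrast, has two concrete problems. First, the extension step contains an actual error: it is \emph{not} true that every rank-one operator is a difference of two rank-one idempotents. A rank-one idempotent has trace $1$, so any difference has trace $0$, while $x\otimes f$ with $f(x)=2$ has trace $2$. Since $\phi$ is only additive, not $\mathbb{C}$-linear, you cannot reach general rank-one operators from rank-one idempotents by additivity alone; some extra ingredient like the paper's $\phi(\mathbb{C}P)\subseteq\mathbb{C}\phi(P)$ is needed, and you have not supplied one. Second, the ``central phase'' you flag as the main obstacle is indeed the crux, and you have not actually given an $R$-only characterization of rank-one idempotents, nor an argument that the local signs $\varepsilon_{P}$ are globally constant; without $\phi(I)=\pm I$ (which you never establish) there is no obvious mechanism forcing coherence of the signs. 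As written, the proposal is a plausible outline with its hardest step left open and its closing step incorrect.
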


\section{Proofs}

In this section we prove our results. First we recall some
notations. Let $X$ and $Y$ be Banach spaces. Recall that a standard operator
algebra on $X$ is a norm closed subalgebra of $B(X)$ which
contains the identity and all finite rank operators. Denote the set of all idempotent operators of $\mathcal{B(H)}$ by $\mathcal{I(H)}$ and the Jordan product of $A,B$ by $A \circ B= \frac{1}{2}(AB+BA)$. Also denote the dual space $X$ by $ X^*$. \par
 For every nonzero $x\in X$ and
$f\in X^*$, the symbol $x\otimes f$ stands for the rank one
linear operator on $ X$ defined by
$$(x\otimes f)y=f(y)x. \hspace{.4cm} (y\in X)$$
If $x,y\in \mathcal{H}$, then
$x\otimes y$ stands for the rank one
linear operator on $ \mathcal{H}$ defined by
$$(x\otimes y)z=<z,y>x  \hspace{.4cm}(z\in\mathcal{H})$$
where $<z,y>$ denotes the inner product of $z$ and $y$.
We need some lemmas to prove our main result.
Let $\mathcal{A} \subseteq B(X)$ and $\mathcal{B} \subseteq B(Y)$ be
standard operator algebras.
 \par The proof of the following lemma is similar to that of Lemma 2.2 in $[13]$.
\begin{lemma}$[13]$
Let $\phi:\mathcal{A} \rightarrow \mathcal{B}$ be an additive map such that preserves the nonzero idempotency of Jordan products of operators. If $N\in \mathcal{A}$ is a finite rank operator such that $N^2=0$, then $\phi(N)^4=0$.
\end{lemma}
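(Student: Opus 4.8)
The plan is to manufacture, out of the single square-zero operator $N$, a whole rational one-parameter family of operators whose Jordan squares are one and the same nonzero idempotent, and then to read off $\phi(N)^4=0$ from the top coefficient of a Laurent polynomial identity on the image side.

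First I would build a companion operator. Since $N$ has finite rank and $N^2=0$, its range lies in its kernel, so I may write $N=\sum_{i=1}^{k} x_i\otimes f_i$ with $\{x_i\}$ and $\{f_i\}$ linearly independent and $f_j(x_i)=0$ for all $i,j$. Using the infinite-dimensionality of $X$, I then choose vectors $w_i$ and functionals $h_i$ making $\{x_i,w_i\}$ and $\{f_i,h_i\}$ a biorthogonal (dual-basis) system, so that $f_i(w_j)=\delta_{ij}$, $h_i(x_j)=\delta_{ij}$ and $h_i(w_j)=0$, and I set $N'=\sum_i w_i\otimes h_i$. A direct computation then yields $N'^2=0$ together with $NN'+N'N=\sum_i\bigl(x_i\otimes h_i+w_i\otimes f_i\bigr)$, which is precisely the nonzero finite-rank idempotent projecting onto $\mathrm{span}\{x_i,w_i\}$.

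Next, for every nonzero rational $s$ I would consider the finite-rank operator $A_s=sN+s^{-1}N'\in\mathcal A$. Because $N^2=N'^2=0$, its Jordan square collapses to $A_s\circ A_s=A_s^2=s^2N^2+(NN'+N'N)+s^{-2}N'^2=NN'+N'N$, i.e. the same fixed nonzero idempotent $B$ for every $s$. Since $\phi$ preserves the nonzero idempotency of Jordan products, each $\phi(A_s)\circ\phi(A_s)=\phi(A_s)^2$ is again a nonzero idempotent; and since additivity makes $\phi$ rational-homogeneous, $\phi(A_s)=s\,u+s^{-1}v$ with $u=\phi(N)$ and $v=\phi(N')$. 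Hence $(su+s^{-1}v)^2=s^2u^2+(uv+vu)+s^{-2}v^2$ is idempotent for every rational $s\neq0$.

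Finally I would extract the conclusion. Writing $\alpha=u^2$, $\beta=uv+vu$, $\gamma=v^2$, the relation $(s^2\alpha+\beta+s^{-2}\gamma)^2=s^2\alpha+\beta+s^{-2}\gamma$ is an operator-coefficient Laurent polynomial identity in $s$ holding at infinitely many points of $\mathbb Q\setminus\{0\}$; multiplying by $s^4$ turns it into an honest degree-eight polynomial vanishing at infinitely many scalars, so every one of its operator coefficients is zero. The coefficient of $s^8$ is $\alpha^2=u^4$, whence $\phi(N)^4=u^4=0$, as desired. I expect the two delicate points to be the construction of the companion $N'$ with $N'^2=0$ and $NN'+N'N$ a genuine nonzero idempotent (this is exactly where infinite-dimensionality is spent), together with the care needed to compare coefficients while $\phi$ is only rationally, not really, homogeneous; it is the infinitude of $\mathbb Q\setminus\{0\}$ that makes the coefficient comparison legitimate despite this restriction.
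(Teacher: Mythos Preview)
Your argument is correct and is precisely the adaptation of Lemma~2.2 of \cite{co} (reference~[13]) that the paper has in mind: build a finite-rank companion $N'$ with $N'^2=0$ and $NN'+N'N$ a fixed nonzero idempotent, feed the rational one-parameter family $sN+s^{-1}N'$ through the Jordan-square condition, and kill the top Laurent coefficient $\phi(N)^4$ by Vandermonde. The only difference from~[13] is that there the maps are linear, so arbitrary scalars are available, whereas here you correctly restrict to $s\in\mathbb Q\setminus\{0\}$ and observe that infinitely many evaluation points still suffice to force all operator coefficients to vanish.
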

\begin{lemma} Let $\phi:\mathcal{A} \rightarrow \mathcal{B}$ be an additive map. Then the following statements are hold. \par
 $(i)$ If $\phi$ preserves the nonzero idempotency of Jordan products of operators, then $\phi$ is injective. \par
 $(ii)$ If $I \in \mathrm{rng} \phi$ and $\phi$ preserves the nonzero idempotency of Jordan products of operators in both directions, then $\phi (I)= I$ or $\phi (I)=-I$.
\end{lemma}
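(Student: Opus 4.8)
The plan is to prove both parts from one construction: for every nonzero $T\in\mathcal A$ I will exhibit $S\in\mathcal A$ (of finite rank) for which $T\circ S$ is a nonzero idempotent. Granting this, $(i)$ is immediate. Suppose $\phi(T)=0$ with $T\neq0$ and pick such an $S$; then $T\circ S$ is a nonzero idempotent, whereas $\tfrac12(\phi(T)\phi(S)+\phi(S)\phi(T))=0$ is not, contradicting the preserving hypothesis. Hence $\ker\phi=\{0\}$, and since $\phi$ is additive this is exactly injectivity.

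To build $S$, fix $x$ with $Tx\neq0$ and write $y=Tx$. In the generic case $y\notin\operatorname{span}\{x,T^{2}x\}$ (so $x,y$ are independent): choose $f\in X^{*}$ with $f(x)=0$, $f(y)=2$, $f(T^{2}x)=0$ --- possible precisely because $y$ is not in $\operatorname{span}\{x,T^{2}x\}$ --- and set $S=x\otimes f$. Then $T\circ S$ has range inside $\operatorname{span}\{x,y\}$ and one checks $(T\circ S)x=x$ and $(T\circ S)y=y$, so $T\circ S$ is the nonzero idempotent onto $\operatorname{span}\{x,y\}$. The only alternative is that $V=\operatorname{span}\{x,Tx\}$ is $T$-invariant; there I solve the two-dimensional Jordan equation $T|_{V}S_{0}+S_{0}T|_{V}=2\,\mathrm{id}_{V}$ and extend $S_{0}$ by $0$ across a complement of $V$. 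Since $V$ is invariant, $T\circ S$ has range in $V$ and restricts to the identity on $V$, hence is again a nonzero idempotent. The one obstruction to solving this $2\times2$ equation is $T|_{V}$ having eigenvalues $\mu,-\mu$, which either exposes a genuine nonzero eigenvalue of $T$ (then a rank-one $S$ supported on the eigenvector works) or forces $T|_{V}^{2}$ to be a nonzero scalar (then $S_{0}$ a scalar multiple of $T|_{V}$ works). This last case, needed to cover operators without eigenvalues in the real setting, is where the argument is most delicate.

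For $(ii)$, the two-directional hypothesis contains that of $(i)$, so $\phi$ is injective. Applying the forward implication to $I\circ I=I$ shows $\phi(I)^{2}$ is a nonzero idempotent; in particular $\phi(I)\neq0$. Let $R$ be the unique operator with $\phi(R)=I$. Applying the reverse implication to $\phi(R)\circ\phi(R)=I$ gives that $R^{2}$ is a nonzero idempotent, and since $I\circ\phi(B)=\phi(B)$ for all $B$, the two directions together yield the equivalence: $R\circ B$ is a nonzero idempotent if and only if $\phi(B)$ is a nonzero idempotent. I will use this to force $R$ to be central; once $R=\lambda I$, the fact that $R^{2}=\lambda^{2}I$ is a nonzero idempotent gives $\lambda^{2}=1$, so $R=\pm I$, whence $\phi(R)=I$ yields $\phi(I)=I$ (if $R=I$) or $\phi(I)=-I$ (if $R=-I$). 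The main obstacle is exactly the centrality of $R$: the equivalence restricts which products $R\circ B$ are idempotent, and combining it with Lemma 2.1 (through $\phi(N)^{4}=0$, which shows $R\circ N$ is never a nonzero idempotent when $N$ is rank-one and square-zero) should pin down the action of $R$ on the one- and two-dimensional subspaces furnished by the construction in $(i)$; promoting these local constraints to the global identity $R=\lambda I$ is the crux.
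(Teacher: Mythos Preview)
Your argument for $(i)$ is correct and closely parallels the paper's: both reduce to producing, for each nonzero $T$, a finite-rank $S$ with $T\circ S$ a nonzero idempotent. The paper organises the non-generic case slightly differently (first showing $T$ is globally quadratic via Kaplansky, then splitting on $T^2\neq 0$ versus $T^2=0$), but the content is the same. One remark: your description of the obstruction to the $2\times 2$ equation $T|_V S_0 + S_0 T|_V = 2\,\mathrm{id}_V$ as ``eigenvalues $\mu,-\mu$'' is not quite right --- the equation also fails when $T|_V$ has a zero eigenvalue --- and in fact for the particular $V=\operatorname{span}\{x,Tx\}$ you build, the failure of the generic case forces $\operatorname{tr}(T|_V)\neq 0$, so the $\mu,-\mu$ obstruction never actually arises; the only genuine obstruction is a single zero eigenvalue, which your nonzero-eigenvalue branch already covers. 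This is imprecision rather than error.

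Part $(ii)$, however, is not a proof. You correctly set things up: with $\phi(R)=I$ one has the equivalence ``$R\circ B$ is a nonzero idempotent iff $\phi(B)$ is'', and Lemma~2.1 then shows $R\circ N$ is never a nonzero idempotent for rank-one square-zero $N$. You then assert that this ``should pin down'' $R$ as a scalar and call the passage to $R=\lambda I$ ``the crux'' --- but you do not carry it out. That is precisely the work. The paper proceeds by first showing $R$ satisfies a quadratic $R^2=aR+bI$ (note: ``$R^2$ idempotent'' alone does \emph{not} force $x,Rx,R^2x$ to be dependent for all $x$; one already needs the Lemma~2.1 contradiction here to exclude the independent case). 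Comparing with $R^4=R^2$ then yields $R^2=I$ or $R^2=\pm R$. The case $R^2=I$ with $R$ non-scalar is dispatched exactly by your Lemma~2.1 observation. But the cases $R^2=R$ with $R\neq I$ and $R^2=-R$ with $R\neq -I$ require a separate argument that your outline does not cover: here the paper uses that $\phi(I)$ is itself a nonzero idempotent (from $R\circ I=R$) and then exhibits an idempotent $S$ for which $R+S-I$ is not idempotent, deriving a contradiction from the two-directional hypothesis. Without this case analysis your proof of $(ii)$ is incomplete.
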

\begin{proof} $(i)$ Assume $ \phi (A)=0$. We assert that $A$ satisfies a
quadratic polynomial equation. Otherwise, by the discussion in $[10]$, there exists an $x \in X$ such that $x$, $Ax$ and $A^2x$ are linear independent. Then there is a linear functional $f$ such
that $f(x)=f(A^2x)=0$ and $f(Ax)=2$, because $ \dim X \geq 3$.
Setting $B=x \otimes f$, we have $A \circ B \in P_\mathcal{A}
\setminus \{0\}$, implying that
$$ \phi (A) \circ \phi (B) \in P_\mathcal{B} \setminus \{0\}.$$
This is a contradiction, because $ \phi (A) \circ \phi (B)=0$. So by the discussion in $[10]$, $A$ satisfies a
quadratic polynomial equation. \par
Assume on the contrary that $A$ is a nonzero operator. For any $B \in \mathcal{A}$ we have
 $$ \phi (A) \circ \phi (B)=0.$$
However, there exists $B=x \otimes f$ such that $A \circ B=Ax \otimes f+x \otimes fA$
is a nonzero idempotent, a contradiction. We construct such $B$. \par By the proved assertion, $A$ satisfies a quadratic polynomial equation. The spectrum of such $A$ consists only of eigenvalues. If $A^2 \neq 0$, then $A$ has a nonzero eigenvalue $\lambda$, because in this case there exist $r,s \in \mathbb{C}$ such that $rs \neq 0$ and $ \lambda $ satisfies a quadratic polynomial equation $ \alpha ^2=r \alpha +s$. Since $rs \neq 0$, $ \alpha ^2=r \alpha +s$ has a nonzero root. Let $x$ be its eigenvector. Choose a bounded functional $f$ with $f(x)= \frac{1}{2 \lambda}$  to form $B =x \otimes f$ with the desired properties. \par The remaining case is
$A^2 = 0$. Since $A$ is nonzero, we can find a vector $x$ so that $Ax \neq 0$ and a functional
$f$ with $f(x)=0$ and $f(Ax)=1$ to form $B =x \otimes f$ with the desired
properties. The proof is complete. \par
$(ii)$ Since $I \in \mathrm{rng} \phi$, there exists a
nonzero operator $U \in \mathcal{A}$ such that $ \phi (U)=I$. We show that $U=I$ or $-I$. We
have $\phi(U) \circ \phi(U)\in \mathcal{P}_{ \mathcal{B}} \setminus \{0\}$. Hence we obtain
$$U^2=U \circ U \in \mathcal{P}_{ \mathcal{A}} \setminus \{0\}. \leqno(1)$$
This implies that for any $x \in X$, $x$, $Ux$ and $U^2x$ are linear dependent. Thus $U$ satisfies a quadratic polynomial equation, by the discussion in $[10]$. This together with $(1)$ yields that there exist $a,b \in  \mathbb{C} $ such that we have
$$U^2=aU+bI. \leqno(2)$$
From $(1)$ and $(2)$, we obtain the answers $(0,1)$, $(1,0)$ and $(-1,0)$ for $(a,b)$ which imply that $U^2=I$, $U^2=U$ýand $U^2=-U$. \par
Let $U^2=U$. We assert that $U=I$. Assume on the contrary that $U \neq I$. From $ U \circ I \in \mathcal{P}_\mathcal{A} \setminus \{0\}$, we obtain
$$ \phi (I)=I \circ \phi (I)  \in \mathcal{P}_\mathcal{B} \setminus \{0\}. \leqno(3)$$
 On the other hand, there exists an idempotent operator $T$ such that
$U-T$ is not idempotent. In fact, $U+S-I$ isn't idempotent, where
$S=I-T$. Thus
$$(U+S-I) \circ I=U+S-I \not \in
\mathcal{P}_\mathcal{A} \setminus \{0\}$$
 which implies that
 $$ \phi (U+S-I) \circ \phi ( I) \not \in \mathcal{P}_\mathcal{B} \setminus \{0\}$$
This together with $(3)$ yields that
$$ \phi (I) \circ \phi(S) \not \in \mathcal{P}_\mathcal{B} \setminus \{0\}$$
which implies that
$$S=I \circ S \not \in \mathcal{P}_\mathcal{A} \setminus \{0\}.$$
This is a contradiction, because $S$
is idempotent. So the proof of assertion is completed. \par
With a similar proof, the assumption $U^2=-U$ yields that $U=-I$. \par
Now let $U^2=I$. We assert that $U$ is a multiple of $I$. Assume on the contrary that $U$ is
a non-scalar operator. Since $I$ and $U$ are linear independent,
there is a nonzero vector $x \in X$ such that $x$ and $Ux$ are
linear independent. Hence there exists $f \in X^*$ such that
$f(x)=0$ and $f(Ux)=1$. Setting $B=x \otimes f$, we obtain
$$U \circ B \in \mathcal{P}_\mathcal{A} \setminus \{0\}$$
which implies that
$$ \phi (B)= \phi (U) \circ \phi (B) \in \mathcal{P}_\mathcal{B} \setminus \{0\}.$$
This is a contradiction, because $B$ is a nilpotent such that $B^2=0$ and so by Lemma 2.1, $\phi (B)$ is a
nilpotent operator. So the proof of assertion is completed.
By the proved assertion, there exists a nonzero
complex number $\lambda$ such that $U= \lambda I$. Since $U^2=I$, we obtain $ \lambda
^2=1$ and this completes the proof.
\end{proof}
\begin{theorem} $[4]$
Let $\mathcal{H}$ and $\mathcal{K}$ be two infinite dimensional real or complex Hilbert spaces and $\phi:\mathcal{B(H)}\rightarrow  \mathcal{B(K)}$ be an additive map preserving idempotents. Suppose that the range of $\phi$ contains all minimal idempotents. Then $\phi$ either annihilates minimal idempotents or there exists a bounded linear or conjugate linear bijection $A: \mathcal{H} \rightarrow \mathcal{K}$ such that $ \phi(T)= ATA^{-1}$ for every $T \in \mathcal{B(H)}$ or $ \phi(T)= AT^tA^{-1}$ for every $T \in \mathcal{B(H)}$ ( in the case that $\mathcal{H}$ and $\mathcal{K}$ are real, $A$ is linear).
\end{theorem}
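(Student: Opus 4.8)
The plan is to reduce the theorem to the combinatorics of minimal (rank-one) idempotents and then to invoke the fundamental theorem of projective geometry. The algebraic backbone is an elementary observation: for idempotents $P,Q$ the sum $P+Q$ is again idempotent exactly when $PQ=QP=0$, and the natural order $P\leq Q$ (meaning $PQ=QP=P$) holds exactly when $Q-P$ is an idempotent orthogonal to $P$. Since $\phi$ is additive and sends idempotents to idempotents, applying it to these identities shows immediately that $\phi$ preserves orthogonality of idempotents, i.e. $PQ=QP=0$ forces $\phi(P)\phi(Q)=\phi(Q)\phi(P)=0$, and that $\phi$ preserves the order, i.e. $P\leq Q$ implies $\phi(P)\leq\phi(Q)$. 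Consequently every finite orthogonal family of idempotents is carried to a finite orthogonal family, which is the structure that drives the rest of the argument.

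Next I would analyse the image of a single minimal idempotent $P$. Using the order-preservation together with the hypothesis that every minimal idempotent of $\mathcal{B(K)}$ lies in $\mathrm{rng}\,\phi$, a comparison of maximal orthogonal families on the two sides forces $\phi(P)$ to be either $0$ or itself a minimal idempotent. I would then establish that the two behaviours cannot be mixed: transporting the orthogonality and additivity relations that link any two minimal idempotents shows that if one of them is annihilated then all are, which is the degenerate alternative of the statement. In the complementary case no minimal idempotent is annihilated, and $P\mapsto\phi(P)$ is a bijection from the minimal idempotents of $\mathcal{B(H)}$ onto those of $\mathcal{B(K)}$ that respects orthogonality in both directions.

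In this non-degenerate case I would translate the bijection into projective language. A minimal idempotent $x\otimes f$ with $f(x)=1$ is precisely the datum of a line $\mathbb{C}x$ together with a complementary hyperplane $\ker f$, and orthogonality of two such idempotents encodes the incidence of the line of one with the hyperplane of the other. The orthogonality-preserving bijection therefore induces an incidence-preserving transformation of the associated projective geometry, and the fundamental theorem of projective geometry supplies a semilinear bijection $A$; over $\mathbb{C}$ the accompanying field automorphism is forced to be the identity or complex conjugation, so that $A$ is linear or conjugate linear, while over $\mathbb{R}$ it must be the identity. According to whether $\phi$ respects the line-to-line or the line-to-hyperplane correspondence one obtains, after replacing $\phi$ by $T\mapsto A^{-1}\phi(T)A$, that the normalized map fixes every minimal idempotent or fixes it ``up to transpose''; this is exactly the dichotomy producing $ATA^{-1}$ versus $AT^{t}A^{-1}$.

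Finally I would upgrade the normalized map to the identity, respectively the transpose, on all of $\mathcal{B(H)}$: every finite-rank operator is a finite sum of rank-one operators assembled from minimal idempotents, so additivity together with the idempotency relations pins down $\phi$ on finite-rank operators, and a standard order and continuity argument extends this to arbitrary bounded operators while forcing $A$ to be bounded with bounded inverse. I expect the genuine obstacle to be the reconstruction step: checking that the incidence data carried by the minimal idempotents is rich enough and two-sided to feed the fundamental theorem (this is precisely where the range hypothesis is indispensable), correctly isolating the line-versus-hyperplane alternative that is responsible for the transpose, and controlling the infinite-dimensional analytic subtleties, namely the continuity of the semilinear map and the passage from finite-rank to arbitrary operators.
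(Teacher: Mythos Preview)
The paper does not prove this statement at all: it is Theorem~2.3, quoted from Cui~[4] with attribution and used as a black box in the proof of Theorem~1.1. So there is no proof in the paper to compare your proposal against.

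That said, your sketch follows the standard route to results of this type (orthogonality and order preservation on idempotents via the identity ``$P+Q$ idempotent $\Leftrightarrow PQ=QP=0$'', reduction to a map on minimal idempotents, fundamental theorem of projective geometry, extension from finite rank to all of $\mathcal{B(H)}$), and this is indeed the strategy in Cui's paper. Two places in your outline would need real work rather than a sentence. First, the theorem only assumes idempotents are preserved in \emph{one} direction, so orthogonality and order are a priori preserved only one way; the ``both directions'' you need to feed the projective-geometry step has to be manufactured from the range hypothesis, and the mechanism for that should be spelled out. Second, your bijectivity claim on minimal idempotents is not immediate: the range hypothesis says each minimal idempotent of $\mathcal{B(K)}$ equals $\phi(T)$ for \emph{some} $T$, not necessarily for a minimal idempotent $T$, so surjectivity of the restricted map requires an additional argument. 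These are exactly the spots where the range condition does its work, as you correctly anticipate, but they are the substantive content rather than afterthoughts.
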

\par \vspace{.3cm}
\textbf{Proof of Theorem 1.1.} Since by Lemma 2.2, $\phi (I)= I$ or $\phi (I)= -I$, from $P=I \circ P \in \mathcal{I(H)} \setminus \{0\}$ we obtain that $ \phi (P)$ or $- \phi (P)$ belongs to $ \mathcal{I(H)} \setminus \{0\}$. This together with $\phi (0)=0$ implies that $\phi$ or $- \phi$ preserves the idempotent operators in both directions. Hence the forms of $\phi$ follows from Theorem 2.3.
\begin{proposition} Let $ \dim \mathcal{H} \geq 3$. Let $A$ be an arbitrary operator of $\mathcal{B(H)}$ and $P$ be a rank one idempotent operator. Then $A \in \mathbb{C}^*P$ if and only if for every $T \in \mathcal{B(H)}$ such that $PT \in \mathcal{I(H)} \setminus \{0\}$ we have $AT \not \in \mathcal{I(H)} \setminus \{0\}$, where $ \mathbb{C}^*= \mathbb{C} \setminus \{0,1\}$.
\end{proposition}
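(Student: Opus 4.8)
The plan is to fix a representation $P = x \otimes f$ with $f \in \mathcal{H}^*$ and $f(x) = 1$, and to record at the outset the basic observation that, for any $T$, one has $PT = x \otimes (f \circ T)$, which is a nonzero idempotent precisely when $f(Tx) = 1$. With this in hand the forward implication is immediate: if $A = \lambda P$ with $\lambda \in \mathbb{C}^*$ and $T$ satisfies $PT \in \mathcal{I(H)} \setminus \{0\}$, then $AT = \lambda PT$ is nonzero (as $\lambda \neq 0$), while $(AT)^2 = \lambda^2 PT \neq \lambda PT = AT$ (as $\lambda^2 \neq \lambda$); hence $AT$ is not idempotent, so $AT \not\in \mathcal{I(H)} \setminus \{0\}$.

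For the converse I would argue by contraposition: assuming $A \notin \mathbb{C}^* P$, I would exhibit a single $T$ for which both $PT$ and $AT$ are nonzero idempotents. The governing dichotomy is whether $\operatorname{ran} A \subseteq \mathbb{C}x$. If not, then there is a vector $u$ with $f(u) \neq 0$ and $Au \notin \mathbb{C}x$; indeed, if every $u$ with $f(u) \neq 0$ satisfied $Au \in \mathbb{C}x$, then writing any $w$ via $w = (w - f(w)x) + f(w)x$ and using $f(x)=1$ one checks $Aw \in \mathbb{C}x$ for all $w$, forcing $\operatorname{ran} A \subseteq \mathbb{C}x$.

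In this first case, since $x$ and $Au$ are linearly independent I can choose $g \in \mathcal{H}^*$ with $g(x) = f(u)^{-1}$ and $g(Au) = 1$, and set $T = u \otimes g$. Then $f(Tx) = g(x)f(u) = 1$, so $PT$ is a nonzero idempotent, while $AT = (Au) \otimes g$ with $g(Au) = 1$ is also a nonzero idempotent. In the remaining case $A = x \otimes h$ for some $h \in \mathcal{H}^*$. If $h \in \mathbb{C}f$ then $A \in \mathbb{C}P$, and since $A \notin \mathbb{C}^* P$ we must have $A = P$ (where $T = P$ already gives $PT = AT = P$) or $A = 0$. If instead $h \notin \mathbb{C}f$, then $f$ and $h$ are linearly independent, so there is $v$ with $f(v) = h(v) = 1$; taking any $T$ with $Tx = v$, for instance $T = v \otimes f$, both $PT = x \otimes (f \circ T)$ and $AT = x \otimes (h \circ T)$ are nonzero idempotents, since each condition reduces to a requirement on $Tx$, namely $f(Tx) = 1$ and $h(Tx) = 1$.

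The main obstacle is precisely the case $A = x \otimes h$: here right multiplication cannot move the range off the line $\mathbb{C}x$, so the rank-one construction of the first case is unavailable, and one must instead exploit the linear independence of the two functionals $f$ and $h$ to satisfy both idempotency constraints through the single vector $Tx$. I would also flag the degenerate value $A = 0$, which lies outside $\mathbb{C}^* P$ yet vacuously satisfies the right-hand condition; the converse is therefore to be read for nonzero $A$, which matches the intended application where $A$ arises as a nonzero image under $\phi$.
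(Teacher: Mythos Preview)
Your argument follows the same overall strategy as the paper: a two-case split (range of $A$ contained in $\mathbb{C}x$ or not) followed by an explicit rank-one choice of $T$ in each case. The paper obtains this split by invoking Bre\v{s}ar--\v{S}emrl on locally linearly dependent operators, whereas you derive it directly from the elementary remark that if $Au\in\mathbb{C}x$ for every $u$ with $f(u)\neq 0$ then $Aw\in\mathbb{C}x$ for all $w$; this makes your proof self-contained. In the first case the paper takes $T=x\otimes y$ at a point $x$ where $Px$ and $Ax$ are independent, while you take $T=u\otimes g$ with $u$ chosen so that $f(u)\neq 0$ and $Au\notin\mathbb{C}x$; the constructions are different in detail but equivalent in spirit. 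Your treatment of the boundary cases is more careful: the paper's case analysis implicitly assumes $A\notin\mathbb{C}P$ rather than merely $A\notin\mathbb{C}^{*}P$, so it silently skips $A=P$ (harmless, since $T=P$ works) and $A=0$. You are correct that $A=0$ is a genuine counterexample to the biconditional as stated, and that the converse should be read for nonzero $A$; the paper does not address this point.
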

\begin{proof} If $A \in \mathbb{C}^*P$, then there exists a $ \lambda \in \mathbb{C}^*$ such that $A= \lambda P$. hence it is trivial that for every $T$ such that $PT \in \mathcal{I(H)} \setminus \{0\}$ then $ \lambda PT \not \in \mathcal{I(H)} \setminus \{0\}$. \par
Conversely, Let $A \not \in \mathbb{C}^*P$. Since $P$ is rank one, by $[2]$, there exists either an $x \in \mathcal{H}$ such that $Px$ and $Ax$ are linear independent or an $x \in \mathcal{H}$ and linear independent vectors $z_1,z_2 \in \mathcal{H}$ such that $P=x \otimes z_1$ and $A=x \otimes z_2$. \par If $Px$ and $Ax$ are linear independent, then there exists $y \in \mathcal{H}$ such that $<Px,y>=<Ax,y>=1$, because $ \dim \mathcal{H} \geq 3$. Setting $T=x \otimes y$ follows that $PT \in \mathcal{I(H)} \setminus \{0\}$ and also $AT \in \mathcal{I(H)} \setminus \{0\}$. This is a contradiction. \par
If $P=x \otimes z_1$ and $A=x \otimes z_2$, then there exist $y,z_3 \in \mathcal{H}$ such that $<x,z_3>=<y,z_1>=<y,z_2>=1$. Setting $T=y \otimes z_3$ follows that $PT \in \mathcal{I(H)} \setminus \{0\}$ and also $AT \in \mathcal{I(H)} \setminus \{0\}$. This is a contradiction. \par
These contradictions yields that $A \in \mathbb{C}^*P$ and this completes the proof.
\end{proof}
\begin{lemma} Let $\phi:\mathcal{A} \rightarrow \mathcal{B}$ be a surjective additive map such that
 $$AB \in \mathcal{P}_\mathcal{A} \setminus \{0\} \Leftrightarrow \phi (A) \phi (B) \in \mathcal{P}_\mathcal{B} \setminus \{0\}$$
 for every $A \in \mathcal{A}$ and $B \in \mathcal{B}$. Then the following statements are hold. \par
  $(i)$  $\phi (I)= I$ or $\phi (I)=-I$. \par
  $(ii)$ If $ \mathcal{A} = \mathcal{B(H)}$ with $ \dim \mathcal{H} \geq 3$ and $ \mathcal{B}= \mathcal{B(K)}$, then $ \phi ( \mathbb{C}P ) \subseteq \mathbb{C} \phi (P)$, for every rank one idempotent $P$.
\end{lemma}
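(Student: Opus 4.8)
The plan is to first upgrade $\phi$ to a bijection, deduce (i) from a square-zero argument, and then obtain (ii) by transporting a purely idempotent-theoretic description of rank one operators together with Proposition 2.4.

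For (i), I would begin by noting that $\phi$ is injective: if $\phi(A)=0$ with $A\neq 0$, pick $x$ with $Ax\neq 0$ and $f\in X^*$ with $f(Ax)=1$, so that $B=x\otimes f$ satisfies $AB=(Ax)\otimes f\in\mathcal{P}_\mathcal{A}\setminus\{0\}$ while $\phi(A)\phi(B)=0$, contradicting the hypothesis. Hence $\phi$ is bijective, and the symmetry of the defining equivalence makes $\phi^{-1}$ a two-sided preserver as well. Let $W=\phi^{-1}(I)$. Applying the two-sided property to $W\cdot W$ and using $\phi(W)\phi(W)=I\in\mathcal{P}_\mathcal{B}\setminus\{0\}$ gives $W^2\in\mathcal{P}_\mathcal{A}\setminus\{0\}$, and since $\phi(W)\phi(B)=\phi(B)$ for all $B$ we get $WB\in\mathcal{P}_\mathcal{A}\setminus\{0\}\Leftrightarrow\phi(B)\in\mathcal{P}_\mathcal{B}\setminus\{0\}$. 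The crucial remark is that if $B^2=0$ then $B\cdot B=0\notin\mathcal{P}_\mathcal{A}\setminus\{0\}$ forces $\phi(B)^2\notin\mathcal{P}_\mathcal{B}\setminus\{0\}$, so $\phi(B)$ is not a nonzero idempotent; consequently $WB\notin\mathcal{P}_\mathcal{A}\setminus\{0\}$ for every square-zero $B$. Were $W$ non-scalar, some $x$ would have $x,Wx$ linearly independent, and choosing $f$ with $f(x)=0$, $f(Wx)=1$ would make $B=x\otimes f$ square-zero with $WB=(Wx)\otimes f\in\mathcal{P}_\mathcal{A}\setminus\{0\}$, a contradiction. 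Thus every vector is an eigenvector of $W$, so $W=\lambda I$, and $W^2\in\mathcal{P}_\mathcal{A}\setminus\{0\}$ with $W\neq 0$ gives $\lambda^2=1$. Hence $W=\pm I$, and since $\phi(W)=I$ additivity yields $\phi(I)=I$ when $W=I$ and $\phi(I)=-I$ when $W=-I$.

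For (ii), I would first replace $\phi$ by $-\phi$ if necessary—legitimate since $(-\phi(A))(-\phi(B))=\phi(A)\phi(B)$ and $\mathbb{C}\phi(P)=\mathbb{C}(-\phi(P))$—so as to assume $\phi(I)=I$. Then for any nonzero idempotent $R$, $IR=R$ gives $\phi(R)=\phi(I)\phi(R)\in\mathcal{P}_\mathcal{B}\setminus\{0\}$, and the reverse implication holds too, so $\phi$ restricts to a bijection of the nonzero idempotents of $\mathcal{B(H)}$ onto those of $\mathcal{B(K)}$. The key structural fact is that a nonzero idempotent is rank one exactly when it is not a sum of two nonzero idempotents: if $R,S$ are idempotents with $R+S$ idempotent, then expanding $(R+S)^2=R+S$ gives $RS+SR=0$, so $RS=-SR$, and left- and right-multiplying by $R$ gives $RS=RSR=-RS$, whence $RS=SR=0$ and $\mathrm{rank}(R+S)=\mathrm{rank}(R)+\mathrm{rank}(S)$; thus a rank one idempotent admits no such splitting, while any idempotent of rank at least two does. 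Because $\phi$ is additive, injective, and bijective on nonzero idempotents, $P=R+S$ with $R,S$ nonzero idempotents holds in $\mathcal{B(H)}$ if and only if $\phi(P)=\phi(R)+\phi(S)$ is such a splitting in $\mathcal{B(K)}$ (lifting $\phi(R),\phi(S)$ by surjectivity and recovering $P=R+S$ by injectivity). Therefore $\phi(P)$ is a rank one idempotent whenever $P$ is.

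It then remains to transport the scalar-multiple relation. Fixing a rank one idempotent $P$ and $\lambda\in\mathbb{C}^*$ (the cases $\lambda\in\{0,1\}$ being immediate), Proposition 2.4 gives $(\lambda P)T\notin\mathcal{I(H)}\setminus\{0\}$ for every $T$ with $PT\in\mathcal{I(H)}\setminus\{0\}$. Taking any $S\in\mathcal{B(K)}$ with $\phi(P)S\in\mathcal{I(K)}\setminus\{0\}$, surjectivity gives $S=\phi(T)$, the two-sided property yields $PT\in\mathcal{I(H)}\setminus\{0\}$, hence $(\lambda P)T\notin\mathcal{I(H)}\setminus\{0\}$, and so $\phi(\lambda P)S=\phi(\lambda P)\phi(T)\notin\mathcal{I(K)}\setminus\{0\}$. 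Since $\phi(P)$ is a rank one idempotent and $\dim\mathcal{K}\geq 3$, the converse half of Proposition 2.4 applies in $\mathcal{B(K)}$ and gives $\phi(\lambda P)\in\mathbb{C}^*\phi(P)\subseteq\mathbb{C}\phi(P)$, proving (ii). The main obstacle is exactly the step in the previous paragraph: Proposition 2.4 can be invoked in the target only after one knows $\phi(P)$ is again rank one, and what makes this transfer possible is the description of minimality by non-splittability, which is preserved by any additive, idempotent-preserving bijection.
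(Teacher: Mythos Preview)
Your argument is correct and follows the same architecture as the paper: establish $\phi(I)=\pm I$, deduce that $\pm\phi$ preserves nonzero idempotents in both directions, show that rank one idempotents are preserved, and then invoke Proposition~2.4 together with surjectivity to transport the scalar-multiple relation. The last step matches the paper almost verbatim.

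Where you differ is in the supporting lemmas. For (i) the paper simply appeals to Lemma~2.1 and to Lemma~2.2 of \cite{co} (reference [13]), whereas you give a self-contained proof: a direct injectivity argument via $AB=(Ax)\otimes f$, followed by the square-zero trick showing that $W=\phi^{-1}(I)$ must be scalar. For the rank-one step in (ii) the paper outsources the work to Lemma~2.6 of \cite{ta} (reference [12]), while you supply an internal characterization---a nonzero idempotent has rank one iff it is not a sum of two nonzero idempotents---and observe that this is preserved by any additive bijection of nonzero idempotents. Both substitutions are sound; your version has the advantage of being independent of the cited papers, at the cost of a little extra length. One small point you leave implicit (as does the paper) is that $\dim\mathcal{K}\ge 3$, needed to apply Proposition~2.4 on the target side; in the intended application $\mathcal{K}$ is infinite dimensional, so this is harmless.
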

\begin{proof} $(i)$ It is proved by using Lemma 2.1 and similar to the proof of Lemma 2.2 in $[13]$. \par
  $(ii)$ By $(i)$, $\phi (I)= I$ or $\phi (I)=-I$. Since $\phi (0)=0$, we can conclude from $(i)$ that $\phi$ or $- \phi$ preserves the idempotent operators in both directions. By Lemma 2.6 in $[12]$, $\phi$ or $- \phi$ preserves the rank one idempotent operators in both directions. If $A \in \mathbb{C}^*P$, then by Proposition 2.4, for every $T \in \mathcal{B(H)}$ such that $PT \in \mathcal{I(H)} \setminus \{0\}$ we have $AT \not \in \mathcal{I(H)} \setminus \{0\}$ which by surjectivity of $\phi$ imply that for every $T' \in \mathcal{B(H)}$ such that $ \phi (P)T' \in \mathcal{I(H)} \setminus \{0\}$ we have $ \phi (A)T' \not \in \mathcal{I(H)} \setminus \{0\}$. Since $ \phi (P)$ is a rank one idempotent, by Proposition 2.4 we can conclude that $ \phi (A) \in \mathbb{C}^* \phi (P)$. This together with $(i)$ and $\phi (0)=0$ follows that $ \phi ( \mathbb{C}P ) \subseteq \mathbb{C} \phi (P)$. This completes the proof.
\end{proof}
\begin{proposition} Let $\mathcal{H}$ and $\mathcal{K}$ be two infinite dimensional real or complex Hilbert spaces and $\phi:\mathcal{B(H)}\rightarrow  \mathcal{B(K)}$ be an additive map. If $\phi$ preserves the idempotent operators, then $\phi$ preserves the square zero operators.
\end{proposition}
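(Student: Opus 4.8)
The plan is to exploit additivity together with idempotent-preservation by manufacturing, for each square-zero operator $N$, a single idempotent $P$ for which \emph{both} $P+N$ and $P-N$ are again idempotent; applying $\phi$ and subtracting the two resulting idempotency identities will isolate $\phi(N)^2$.

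First I would construct $P$. Since $N^2=0$ we have $\mathrm{ran}\,N \subseteq \ker N$, and as $N$ is bounded $\ker N$ is closed, so $\overline{\mathrm{ran}\,N}\subseteq \ker N$. Let $P$ be the orthogonal projection of $\mathcal{H}$ onto the closed subspace $\overline{\mathrm{ran}\,N}$. Then $P$ is an idempotent (indeed self-adjoint) operator, $P$ fixes $\mathrm{ran}\,N$ pointwise so that $PN=N$, and $\mathrm{ran}\,P=\overline{\mathrm{ran}\,N}\subseteq\ker N$ gives $NP=0$. A direct computation then shows
$$(P+N)^2=P^2+PN+NP+N^2=P+N,\qquad (P-N)^2=P^2-PN-NP+N^2=P-N,$$
so that $P$, $P+N$ and $P-N$ are all idempotent.

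Next, using that $\phi$ is additive and preserves idempotents, the operators $\phi(P)$, $\phi(P)+\phi(N)=\phi(P+N)$ and $\phi(P)-\phi(N)=\phi(P-N)$ are all idempotent. Writing out $(\phi(P)\pm\phi(N))^2=\phi(P)\pm\phi(N)$ and cancelling $\phi(P)^2=\phi(P)$ yields the two identities
$$\phi(P)\phi(N)+\phi(N)\phi(P)+\phi(N)^2=\phi(N),$$
$$-\phi(P)\phi(N)-\phi(N)\phi(P)+\phi(N)^2=-\phi(N).$$
Adding these eliminates the cross terms and gives $2\,\phi(N)^2=0$, hence $\phi(N)^2=0$, which is exactly square-zero preservation.

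The only genuinely creative step is the construction of $P$, and the main point to get right is that $P$ must simultaneously satisfy $PN=N$ and $NP=0$; this is precisely what forces the two sign-twisted idempotents $P\pm N$ to exist, and it is available here because we are in a Hilbert space, where the orthogonal projection onto $\overline{\mathrm{ran}\,N}$ supplies such a $P$. Over a general Banach space one would instead have to produce a bounded idempotent whose range is a closed complemented subspace lying between $\overline{\mathrm{ran}\,N}$ and $\ker N$, which is where the argument could become delicate; everything after the construction is formal manipulation valid as long as $2$ is invertible, i.e. in characteristic $\neq 2$.
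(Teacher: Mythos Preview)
Your proof is correct and follows essentially the same approach as the paper's: both construct an idempotent $P$ with $PN=N$ and $NP=0$ (the paper projects onto $\ker N$, you onto $\overline{\mathrm{ran}\,N}$), observe that $P+cN$ is idempotent for several scalars $c$, and then use two instances (the paper takes $c=1,2$, you take $c=\pm 1$) together with additivity to eliminate the cross terms and conclude $\phi(N)^2=0$. The choice $c=\pm 1$ makes the cancellation a touch cleaner, but the underlying idea is identical.
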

\begin{proof} Let $N \in \mathcal{B(H)}$ be a square zero operator. Then we have
$ \mathcal{H}= \ker N \oplus M$ for some closed subspace $M$ of $\mathcal{H}$. Thus by
this decomposition $N$ has the following operator matrix
 $$ N=\begin{pmatrix}
     0 & N_1 \\
     0 & 0 \\
   \end{pmatrix}.$$
 If  $$ A=\begin{pmatrix}
     I & 0 \\
     0 & 0 \\
   \end{pmatrix}$$
then $A+nN\in \mathcal{I(H)}$ for every natural number $n$. It implies that $ \phi (A)+n \phi (N) \in \mathcal{I(K)}$ for every natural number $n$. That is,
$$\phi(A)+n\phi(N)=\phi(A)^2+n(\phi(A)\phi(N)+\phi(N)\phi(A))+n^2\phi(N)^2$$
for all $n$. Setting $n=1$ and $n=2$
yield
$$\phi(N)=\phi(A)\phi(N)+\phi(N)\phi(A)+\phi(N)^2,$$
$$2\phi(N)=2(\phi(A)\phi(N)+\phi(N)\phi(A))+4\phi(N)^2$$
which imply that $\phi(N)^2=0$ and this completes the proof.
\end{proof}
\begin{theorem} $[1]$ Let $\mathcal{H}$ and $\mathcal{K}$ be two infinite dimensional complex Hilbert spaces and $\phi:\mathcal{B(H)}\rightarrow  \mathcal{B(K)}$ be a surjective additive map such that $ \phi ( \mathbb{C}P ) \subseteq \mathbb{C} \phi (P)$ holds for every rank one operator $P$. Then $\phi$ preserves square zero in both
directions if and only if there exists a nonzero scalar c and a bounded linear or conjugate linear bijection $A: \mathcal{H} \rightarrow \mathcal{K}$ such that $ \phi(T)= c ATA^{-1}$ for every $T \in \mathcal{B(H)}$ or $ \phi(T)= c AT^tA^{-1}$ for every $T \in \mathcal{B(H)}$.
\end{theorem}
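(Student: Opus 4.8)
The asserted equivalence splits into a trivial direction and a substantial one. For the ``if'' direction, if $\phi(T)=cATA^{-1}$ then $\phi(N)^2=c^2AN^2A^{-1}$, so $\phi(N)^2=0$ exactly when $N^2=0$; the same computation together with $(N^t)^2=(N^2)^t$ handles the transpose form. I would dispose of this in a line and concentrate on the converse, where the hypotheses are that $\phi$ is surjective and additive, that $\phi(\mathbb{C}P)\subseteq\mathbb{C}\phi(P)$ for every rank one $P$, and that $N^2=0\Leftrightarrow\phi(N)^2=0$.

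The plan for the converse is to reduce everything to the action of $\phi$ on rank one operators. A rank one operator $x\otimes f$ is square zero precisely when $f(x)=0$, and for two rank one nilpotents $x\otimes f$ and $y\otimes g$ one computes $((x\otimes f)+(y\otimes g))^2=f(y)\,x\otimes g+g(x)\,y\otimes f$. Hence whether a sum of rank one nilpotents is again nilpotent is governed by the biorthogonality conditions $f(y)=0$ and $g(x)=0$, a relation phrased entirely through the square zero predicate that $\phi$ preserves in both directions. First I would use this incidence structure, together with the ray hypothesis $\phi(\mathbb{C}P)\subseteq\mathbb{C}\phi(P)$, to show that $\phi$ carries nonzero rank one nilpotents to nonzero rank one nilpotents: the point is to single out the rank one nilpotents among all square zero operators by an intrinsic extremality property expressed solely through the square zero relation and finite additive combinations, and then transport it through $\phi$ using surjectivity.

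Once $\phi$ is known to preserve rank one nilpotents, I would reconstruct its action on them by a fundamental theorem of projective geometry type argument. The biorthogonality relation determines, for each rank one nilpotent, both its range ray and its kernel hyperplane, and $\phi$ must respect the resulting incidences; this forces either $\phi(x\otimes f)=(Ax)\otimes(Cf)$ or the swapped pattern $\phi(x\otimes f)=(Cf)\otimes(Ax)$ for suitable additive bijections $A$ on $\mathcal{H}$ and $C$ on $\mathcal{H}^*$, the two patterns producing the similarity and the transpose form respectively. Because the base field is $\mathbb{C}$, the induced map on scalars is a ring automorphism of $\mathbb{C}$; since the images must lie in $\mathcal{B(K)}$, unbounded semilinear maps are excluded, and I would use this together with the ray hypothesis to cut the automorphism down to the identity or complex conjugation, making $A$ a bounded linear or conjugate linear bijection. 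Finally I would extend from rank one operators to all of $\mathcal{B(H)}$ by normalizing: composing $\phi$ with conjugation by $A^{-1}$ (and, in the anti-automorphism case, with transposition) yields a surjective additive map fixing every rank one operator, and a standard argument shows such a map is a scalar multiple of the identity. Unwinding the normalization produces $\phi(T)=cATA^{-1}$ or $\phi(T)=cAT^tA^{-1}$ with a single scalar $c$.

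The main obstacle is the middle step: passing from ``preserves square zero in both directions'' to the semilinear description on rank one operators. Establishing that rank one nilpotents are preserved, rather than merely that rays are preserved, requires an intrinsic characterization of rank one nilpotents inside the square zero operators, and the subsequent geometric reconstruction must simultaneously rule out rank collapse or inflation, separate the automorphism case from the anti-automorphism case, and force the scalar automorphism to be continuous so that $A$ is genuinely bounded. The extension to arbitrary operators and the identification of $c$ are comparatively routine once the rank one picture is secured.
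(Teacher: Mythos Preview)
The paper does not prove this statement at all: it is Theorem~2.7, quoted from reference~[1] (Bai and Hou) and invoked as a black box in the proof of Theorem~1.2. There is therefore no in-paper argument to compare your proposal against; the only comparison that makes sense is with the original Bai--Hou paper.

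That said, your outline follows the architecture one expects for results of this type: dispose of the trivial direction, characterize rank one nilpotents intrinsically through the square-zero relation, apply a fundamental-theorem-of-projective-geometry step to obtain semilinear bijections on vectors and functionals, pin down the field automorphism, and extend from rank one to all operators. The strategy is sound and is indeed in the spirit of how Bai and Hou proceed. Your own assessment of where the real work lies is accurate: the intrinsic recognition of rank one nilpotents among all square-zero operators, and the reduction of the field automorphism to the identity or conjugation, are the substantive steps, and in your sketch they are only gestured at. One small caution: the sentence ``since the images must lie in $\mathcal{B(K)}$, unbounded semilinear maps are excluded'' is not by itself a proof that the ring automorphism of $\mathbb{C}$ is continuous; the actual argument uses the ray hypothesis $\phi(\mathbb{C}P)\subseteq\mathbb{C}\phi(P)$ more directly to force the induced map on scalars to be $\mathbb{R}$-linear or conjugate linear.
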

\par \vspace{.3cm}
\textbf{Proof of Theorem 1.2.} By a similar proof to that of Lemma 2.3 in $[13]$, we obtain that $\phi$ is injective.
Since $\phi (0)=0$, we can conclude from part $(i)$ of Lemma 2.5 that $\phi$ or $- \phi$ preserves the idempotent operators in both directions. This together with Proposition 2.6 and the injectivity of $\phi$ implies that $\phi$ preserves the square zero operators in both directions. Moreover, by part $(i)$ of Lemma 2.5, $ \phi ( \mathbb{C}P ) \subseteq \mathbb{C} \phi (P)$, for every rank one idempotent $P$. Therefore the forms of $\phi$ follow from Theorem 2.7. The scalar c in Theorem 2.7 is the scalar that $\phi (I)= cI$ (by the proof of this theorem in $[1]$).
This together with the part $(i)$ of Lemma 2.5 completes the proof.
\par \vspace{.4cm}{\bf Acknowledgements:} This research is partially
supported by the Research Center in Algebraic Hyperstructures and
Fuzzy Mathematics, University of Mazandaran, Babolsar, Iran.

\bibliographystyle{amsplain}

\begin{thebibliography}{10}\large

\bibitem{ta} {\sc Z. Bai, J. Hou},
\textit{Linear maps and additive maps that preserve
operators annihilated by a polynomial}, J. Math. Anal. Appl. 271 (2002) 139–154

\bibitem{ta} {\sc M. Bre\v{s}ar, P. \v{S}emrl},
\textit{ On locally linearly dependent operators
and derivation}, Transactions of the American Mathematical Society, 351(1999),
1257-1275.

\bibitem{ta} {\sc M.A. Chebotar, W.-F. Ke, P.-H. Lee, N.-C. Wong},
\textit{Mappings preserving zero products}, Studia Math. 155
(2003) 77-94.

\bibitem{ta} {\sc J. Cui},
\textit{Additive drazin inverse preservers}, Linear Algebra Appl. 155
(2007) 77-94.

\bibitem{ta} {\sc M. Dobovi\v{s}ek, B. Kuzma, G. Le\v{s}njak, C. K. Li, T. Petek},
\textit{Mappings that preserve pairs of operators with zero triple
Jordan Product }, Linear Algebra Appl. 426 (2007) 255-279.

\bibitem{ta} {\sc L. Fang, G. Ji},
\textit{Linear maps preserving products of positive or Hermitian
matrices}, Linear Algebra Appl. 419 (2006) 601-611.

\bibitem{ta} {\sc L. Fang, G. Ji, Y. Pang},
\textit{Maps preserving the idempotency of products of
operators}, Linear Algebra Appl. 426 (2007) 40-52.

\bibitem{ta} {\sc J. Hou, Q. Di},
\textit{Maps preserving numerical ranges of operator products},
Proc. Amer. Math. Soc. 134 (2006) 1435-1446.

\bibitem{ta} {\sc J. Hou, L. Zhao},
\textit{Zero-product preserving additive maps on symmetric
operator spaces and self-adjoint operator spaces}, Linear Algebra
Appl. 399 (2005) 235-244.

\bibitem{ta} {\sc I. Kaplansky},
\textit{Infinite abelian groups}, University of Michigan Press,
Ann Arbor 1954.

\bibitem{ta} {\sc C. K. Li, P. \v{S}emrl, N. S. Sze},
\textit{Maps preserving the nilpotency of products of operators},
Linear Algebra Appl. 424 (2007) 222-239.

\bibitem{ta} {\sc A. Taghavi, R. Hosseinzadeh},
\textit{Linear maps preserving idempotent operators }, Bulletin
of the Korean Mathematical Society, 47 (2010) 787-792.

\bibitem{co} {\sc M. Wang, L. Fang, G. Ji, Y. Pang}, \textit {Linear maps preserving
idempotency of products or triple Jordan products of operators},
Linear Algebra Appl. 429 (2008) 181-189.

\bibitem{co} {\sc L. Zhao, J. Hou}, \textit {Jordan zero-product preserving additive maps on operator algebras},
J. Math. Anal. Appl. 314 (2006) 689-700.

\end{thebibliography}

\end{document}